\newtheorem{thm}{Theorem}[section]
\newtheorem{prop}[thm]{Proposition}
\newtheorem{lem}[thm]{Lemma}
\newtheorem{sublem}[thm]{Sublemma}
\newtheorem{lem-def}[thm]{Lemma-Definition}
\newtheorem{cor}[thm]{Corollary}
\theoremstyle{definition}
\newtheorem*{ack}{Acknowledgement}
\newtheorem{rmk}[thm]{Remark}
\newtheorem{dfn}[thm]{Definition}
\numberwithin{equation}{section}
\newcommand{\nc}{\newcommand}
\nc{\on}{\operatorname}
\nc{\fraka}{{\mathfrak a}}
\nc{\frakb}{{\mathfrak b}}
\nc{\frakc}{{\mathfrak c}}
\nc{\frakd}{{\mathfrak d}}
\nc{\frake}{{\mathfrak e}}
\nc{\frakf}{{\mathfrak f}}
\nc{\frakg}{{\mathfrak g}}
\nc{\frakh}{{\mathfrak h}}
\nc{\fraki}{{\mathfrak i}}
\nc{\frakj}{{\mathfrak j}}
\nc{\frakk}{{\mathfrak k}}
\nc{\frakl}{{\mathfrak l}}
\nc{\frakm}{{\mathfrak m}}
\nc{\frakn}{{\mathfrak n}}
\nc{\frako}{{\mathfrak o}}
\nc{\frakp}{{\mathfrak p}}
\nc{\frakq}{{\mathfrak q}}
\nc{\frakr}{{\mathfrak r}}
\nc{\fraks}{{\mathfrak s}}
\nc{\frakt}{{\mathfrak t}}
\nc{\fraku}{{\mathfrak u}}
\nc{\frakv}{{\mathfrak v}}
\nc{\frakw}{{\mathfrak w}}
\nc{\frakx}{{\mathfrak x}}
\nc{\fraky}{{\mathfrak y}}
\nc{\frakz}{{\mathfrak z}}
\nc{\frakA}{{\mathfrak A}}
\nc{\frakB}{{\mathfrak B}}
\nc{\frakC}{{\mathfrak C}}
\nc{\frakD}{{\mathfrak D}}
\nc{\frakE}{{\mathfrak E}}
\nc{\frakF}{{\mathfrak F}}
\nc{\frakG}{{\mathfrak G}}
\nc{\frakH}{{\mathfrak H}}
\nc{\frakI}{{\mathfrak I}}
\nc{\frakJ}{{\mathfrak J}}
\nc{\frakK}{{\mathfrak K}}
\nc{\frakL}{{\mathfrak L}}
\nc{\frakM}{{\mathfrak M}}
\nc{\frakN}{{\mathfrak N}}
\nc{\frakO}{{\mathfrak O}}
\nc{\frakP}{{\mathfrak P}}
\nc{\frakQ}{{\mathfrak Q}}
\nc{\frakR}{{\mathfrak R}}
\nc{\frakS}{{\mathfrak S}}
\nc{\frakT}{{\mathfrak T}}
\nc{\frakU}{{\mathfrak U}}
\nc{\frakV}{{\mathfrak V}}
\nc{\frakW}{{\mathfrak W}}
\nc{\frakX}{{\mathfrak X}}
\nc{\frakY}{{\mathfrak Y}}
\nc{\frakZ}{{\mathfrak Z}}
\nc{\bbA}{{\mathbb A}}
\nc{\bbB}{{\mathbb B}}
\nc{\bbC}{{\mathbb C}}
\nc{\bbD}{{\mathbb D}}
\nc{\bbE}{{\mathbb E}}
\nc{\bbF}{{\mathbb F}}
\nc{\bbG}{{\mathbb G}}
\nc{\bbH}{{\mathbb H}}
\nc{\bbI}{{\mathbb I}}
\nc{\bbJ}{{\mathbb J}}
\nc{\bbK}{{\mathbb K}}
\nc{\bbL}{{\mathbb L}}
\nc{\bbM}{{\mathbb M}}
\nc{\bbN}{{\mathbb N}}
\nc{\bbO}{{\mathbb O}}
\nc{\bbP}{{\mathbb P}}
\nc{\bbQ}{{\mathbb Q}}
\nc{\bbR}{{\mathbb R}}
\nc{\bbS}{{\mathbb S}}
\nc{\bbT}{{\mathbb T}}
\nc{\bbU}{{\mathbb U}}
\nc{\bbV}{{\mathbb V}}
\nc{\bbW}{{\mathbb W}}
\nc{\bbX}{{\mathbb X}}
\nc{\bbY}{{\mathbb Y}}
\nc{\bbZ}{{\mathbb Z}}
\nc{\calA}{{\mathcal A}}
\nc{\calB}{{\mathcal B}}
\nc{\calC}{{\mathcal C}}
\nc{\calD}{{\mathcal D}}
\nc{\calE}{{\mathcal E}}
\nc{\calF}{{\mathcal F}}
\nc{\calG}{{\mathcal G}}
\nc{\calH}{{\mathcal H}}
\nc{\calI}{{\mathcal I}}
\nc{\calJ}{{\mathcal J}}
\nc{\calK}{{\mathcal K}}
\nc{\calL}{{\mathcal L}}
\nc{\calM}{{\mathcal M}}
\nc{\calN}{{\mathcal N}}
\nc{\calO}{{\mathcal O}}
\nc{\calP}{{\mathcal P}}
\nc{\calQ}{{\mathcal Q}}
\nc{\calR}{{\mathcal R}}
\nc{\calS}{{\mathcal S}}
\nc{\calT}{{\mathcal T}}
\nc{\calU}{{\mathcal U}}
\nc{\calV}{{\mathcal V}}
\nc{\calW}{{\mathcal W}}
\nc{\calX}{{\mathcal X}}
\nc{\calY}{{\mathcal Y}}
\nc{\calZ}{{\mathcal Z}}
\nc{\scrA}{{\mathscr A}}
\nc{\scrB}{{\mathscr B}}
\nc{\scrR}{{\mathscr R}}
\nc{\bnu}{{\bar{ \nu}}}
\nc{\olO}{\bar{\calO}}
\nc{\al}{{\alpha}} 
\nc{\be}{{\beta}}
\nc{\ga}{{\gamma}} \nc{\Ga}{{\Gamma}}
 \nc{\hGa}{\hat{\Gamma}}
\nc{\ve}{{\varepsilon}} 
\nc{\la}{{\lambda}} \nc{\La}{{\Lambda}}
\nc{\om}{\omega} \nc{\Om}{\Omega} 
\nc{\sig}{{\sigma}} \nc{\Sig}{{\Sigma}}
\nc{\tnb}{\psi_{\rm tame}}
\nc{\op}{{\on{op}}}
\nc{\ad}{{\on{ad}}}
\nc{\alg}{{\on{alg}}}
\nc{\Ad}{{\on{Ad}}}
\nc{\Adm}{{\on{Adm}}} \nc{\aff}{{\on{af}}}
\nc{\Aut}{{\on{Aut}}}
\nc{\Bun}{{\on{Bun}}}
\nc{\cha}{{\on{char}}}
\nc{\der}{{\on{der}}}
\nc{\Der}{{\on{Der}}}
\nc{\diag}{{\on{diag}}}
\nc{\End}{{\on{End}}}
\nc{\Fl}{{\calF\!\ell}}
\nc{\Gal}{{\on{Gal}}}
\nc{\Gr}{{\on{Gr}}}
\nc{\rH}{{\on{H}}}
\nc{\Hom}{{\on{Hom}}}
\nc{\IC}{{\on{IC}}}
\nc{\id}{{\on{id}}}
\nc{\Id}{{\on{Id}}}
\nc{\ind}{{\on{ind}}}
\nc{\Ind}{{\on{Ind}}}
\nc{\Lie}{{\on{Lie}}}
\nc{\Pic}{{\on{Pic}}}
\nc{\pr}{{\on{pr}}}
\nc{\Res}{{\on{Res}}}
\nc{\res}{{\on{res}}} \nc{\Sat}{{\on{Sat}}}
\nc{\s}{{\on{sc}}}
\nc{\drv}{{\on{der}}}
\nc{\sgn}{{\on{sgn}}}
\nc{\Spec}{{\on{Spec}}}\nc{\Spf}{\on{Spf}} 
\nc{\Sph}{\on{Sph}}
\nc{\St}{{\on{St}}}
\nc{\tr}{{\on{tr}}}
\nc{\Tr}{{\on{Tr}}}
\nc{\Mod}{{\mathrm{-Mod}}}
\nc{\Hilb}{{\on{Hilb}}} 
\nc{\Ext}{{\on{Ext}}} 
\nc{\vs}{{\on{Vec}}}
\nc{\ev}{{\on{ev}}}
\nc{\nO}{{\breve{\calO}}}
\nc{\tS}{{\tilde{S}}}
\nc{\spe}{{\on{sp}}}
\nc{\nscrR}{{\mathscr{R}^{\on{nr}}}}
\nc{\GL}{{\on{GL}}}
\nc{\U}{{\on{U}}}
\nc{\Gl}{\on{Gl}} 
\nc{\GSp}{{\on{GSp}}}
\nc{\gl}{{\frakg\frakl}}
\nc{\SL}{{\on{SL}}} 
\nc{\SU}{{\on{SU}}} 
\nc{\SO}{{\on{SO}}}
\nc{\Conv}{{\on{Conv}}}
\nc{\Rep}{{\on{Rep}}}
\nc{\Dom}{{\on{Dom}}}
\nc{\red}{{\on{red}}}
\nc{\act}{{\on{act}}}
\nc{\nr}{{\on{nr}}}
\nc{\str}{{\on{-}}} 
\nc{\os}{{\bar{s}}}
\nc{\oeta}{{\bar{\eta}}}
\nc{\hookto}{\hookrightarrow}
\nc{\longto}{\longrightarrow}
\nc{\leftto}{\leftarrow}
\nc{\onto}{\twoheadrightarrow}
\nc{\lonto}{\twoheadleftarrow}
\nc{\bio}{{\bar{i}}}
\nc{\bjay}{{\bar{j}}}
\nc{\bFl}{{\overline{\Fl}}} 
\nc{\bU}{{\overline{U}}}
\nc{\tGr}{{\tilde{\Gr}}}
\nc{\cGr}{\calG\! r}
\nc{\oGr}{\overline{\on{Gr}}} 
\nc{\ocGr}{\overline{\calG\! r}}
\nc{\ohtimes}{\stackrel{!}{\otimes}}
\nc{\boxtilde}{\widetilde{\boxtimes}}
\nc{\vstar}{{\varhexstar}}
\nc{\bslash}{\backslash}
\nc{\algQl}{{\bar{\bbQ}_\ell}}
\nc{\sF}{{\bar{F}}}
\nc{\nF}{{\breve{F}}}
\nc{\nW}{{W^{\on{nr}}}}
\nc{\sk}{{\bar{k}}}
\nc{\cont}{\on{c}}
\nc{\supp}{\on{supp}}
\nc{\blt}{\bullet}  
\nc{\dom}{\on{dom}}
\nc{\scon}{{\on{sc}}} 
\nc{\Affine}{\on{Aff}} 
\nc{\nscrA}{\mathscr{A}^{\on{nr}}} 
\nc{\nfraka}{{\fraka^{\on{nr}}}}
\nc{\ran}{{\rangle}}
\nc{\lan}{{\langle}}
\nc{\bk}{{\bar{k}}}
\nc{\tF}{{\tilde{F}}}
\nc{\LG}{{^\text{L}\hspace{-0.04cm}G}}
\nc{\LL}{{^\text{L}\hspace{-0.07cm}L}}
\nc{\pot}[1]{ [\hspace{-0,5mm}[ {#1} ]\hspace{-0,5mm}] }
\nc{\rpot}[1]{ (\hspace{-0,7mm}( {#1} )\hspace{-0,7mm}) }
\nc{\defined}{\hspace{0.1cm}\stackrel{\text{\tiny def}}{=}\hspace{0.1cm}}
\begin{document}

\title[Iwahori Weyl]{On the Iwahori Weyl group}
\author[T. Richarz]{by Timo Richarz}

\address{Timo Richarz: Mathematisches Institut der Universit\"at Bonn, Endenicher Allee 60, 53115 Bonn, Germany}
\email{richarz@math.uni-bonn.de}

\maketitle
\setcounter{section}{1}

Let $F$ be a discretely valued complete field with valuation ring $\calO_F$ and perfect residue field $k$ of cohomological dimension $\leq 1$. In this paper, we generalize the Bruhat decomposition in Bruhat and Tits \cite{BT2} from the case of simply connected $F$-groups to the case of arbitrary connected reductive $F$-groups. If $k$ is algebraically closed, Haines and Rapoport \cite{HR} define the Iwahori-Weyl group, and use it to solve this problem. Here we define the Iwahori-Weyl group in general, and relate our definition of the Iwahori-Weyl group to that of \cite{HR}. Furthermore, we study the length function on the Iwahori-Weyl group, and use it to determine the number of points in a Bruhat cell, when $k$ is a finite field. Except for Lemma \ref{kernkott} below, the results are independent of \cite{HR}, and are directly based on the work of Bruhat and Tits \cite{BT1}, \cite{BT2}. 

\begin{ack}
I thank my advisor M. Rapoport for his steady encouragement and advice during the process of writing. I am grateful to the stimulating working atmosphere in Bonn and for the funding by the Max-Planck society.
\end{ack}

Let $\sF$ be the completion of a separable closure of $F$. Let $\nF$ be the completion of the maximal unramified subextension with valuation ring $\calO_\nF$ and residue field $\bar{k}$. Let $I=\Gal(\sF/\nF)$ be the inertia group of $\nF$, and let $\Sig=\Gal(\nF/F)$.

Let $G$ be a connected reductive group over $F$, and denote by $\scrB=\scrB(G,F)$ the enlarged Bruhat-Tits building. Fix a maximal $F$-split torus $A$. Let $\scrA=\mathscr{A}(G,A,F)$ be the apartment of $\scrB$ corresponding to $A$. 

\subsection{Definition of the Iwahori-Weyl group} Let $M=Z_G(A)$ be the centralizer of $A$, an anisotropic group, and let $N=N_G(A)$ be the normalizer of $A$. Denote by $W_0=N(F)/M(F)$ the relative Weyl group. 

\begin{dfn}\label{IwahoriWeyl}
i) The \emph{Iwahori-Weyl group} $W=W(G,A,F)$ is the group
\[
W\defined N(F)/M_1,
\]
where $M_1$ is the unique parahoric subgroup of $M(F)$. \smallskip \\
ii) Let $\fraka\subset\scrA$ be a facet and  $P_\fraka$ the associated parahoric subgroup. The subgroup $W_\fraka$ of the Iwahori-Weyl group corresponding to $\fraka$ is the group
\[
W_\fraka\defined P_\fraka\cap N(F)/M_1.
\]
\end{dfn}

The group $N(F)$ operates on $\scrA$ by affine transformations
\begin{equation}
\nu: N(F)\longrightarrow \Affine(\scrA). \label{apartact}
\end{equation} 
The kernel $\ker(\nu)$ is the unique maximal compact subgroup of $M(F)$ and contains the compact group $M_1$. Hence, the morphism \eqref{apartact} induces an action of $W$ on $\scrA$. 

Let $G_1$ be the subgroup of $G(F)$ generated by all parahoric subgroups, and define $N_1=G_1\cap N(F)$. Fix an alcove $\fraka_C\subset \scrA$, and denote by $B$ the associated Iwahori subgroup. Let $\mathbb{S}$ be the set of simple reflections at the walls of $\fraka_C$. By Bruhat and Tits \cite[Prop. 5.2.12]{BT2}, the quadruple
\begin{equation}
(G_1,B,N_1,\mathbb{S})  \label{doublesystem}
\end{equation}
is a (double) Tits system with affine Weyl group $W_{\aff}=N_1/N_1\cap B$, and the inclusion $G_1\subset G(K)$ is $B$-$N$-adapted of connected type. 

\begin{lem} \label{techlem}
i) There is an equality $N_1\cap B=M_1$. \smallskip \\
ii) The inclusion $N(F)\subset G(F)$ induces a group isomorphism $N(F)/N_1\xrightarrow{\cong} G(F)/G_1$.
\end{lem}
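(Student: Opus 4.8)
The plan is to deduce both parts from the structure theory recalled in \eqref{doublesystem}, together with the definition of $M_1$ as the unique parahoric subgroup of $M(F)$. First I would prove (i). Since $B$ is the Iwahori subgroup attached to the alcove $\fraka_C$, and $M_1$ is contained in $\ker(\nu)$ hence fixes $\fraka_C$ pointwise, $M_1$ lies in $B$; and $M_1 \subset M(F) \subset N(F)$, while $M_1$ being parahoric means $M_1 \subset G_1$, so $M_1 \subset N_1$. Thus $M_1 \subseteq N_1 \cap B$. For the reverse inclusion, note that an element $n \in N_1 \cap B$ fixes $\fraka_C$, so $\nu(n)$ is the identity affine transformation; in particular $n$ normalizes $A$ and acts trivially on $\scrA$, which forces $n \in \ker(\nu) \subset M(F)$. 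Now $n \in M(F) \cap G_1$: I would argue that $M(F) \cap G_1$ is exactly the parahoric subgroup $M_1$ of $M(F)$, because $G_1$ is generated by parahorics and the parahoric subgroups of $G(F)$ intersect $M(F)$ in parahoric subgroups of $M(F)$ (here using that $M$ is anisotropic, so $M(F)$ has a \emph{unique} parahoric subgroup). Hence $n \in M_1$, giving $N_1 \cap B \subseteq M_1$ and the desired equality.

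For (ii), the inclusion $N(F) \hookrightarrow G(F)$ induces a map $N(F)/N_1 \to G(F)/G_1$; I must show it is bijective. Injectivity is immediate: if $n \in N(F)$ maps into $G_1$, then $n \in N(F) \cap G_1 = N_1$ by definition of $N_1$. Surjectivity is the substantive point. The key input is the Bruhat–Tits mixed Bruhat decomposition for the $B$-$N$-adapted Tits system $(G_1, B, N_1, \mathbb{S})$: any $g \in G(F)$ can be moved, using the action on the building, so that $g$ normalizes $A$ modulo $G_1$. Concretely, $G(F)$ acts transitively on the set of pairs (maximal split torus, alcove in the corresponding apartment) up to $G_1$-conjugacy — equivalently, $G(F) = G_1 \cdot \mathrm{Stab}_{G(F)}(\fraka_C) = G_1 \cdot \widetilde{B}$ where $\widetilde B$ is the full stabilizer, and the stabilizer of $\fraka_C$ normalizes the torus $A$ up to elements of $B$. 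Thus given $g \in G(F)$, after multiplying by an element of $G_1$ we may assume $g$ stabilizes $\fraka_C$; then $g$ normalizes the parahoric $B$, hence preserves its "torus part", and I would conclude $g \in N(F) \cdot G_1$, i.e. $g N_1$ lies in the image. This uses that $G_1$ acts transitively on alcoves of $\scrB$ and that any two maximal $F$-split tori whose apartments contain a common alcove are conjugate under the pointwise stabilizer of that alcove.

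The main obstacle I expect is surjectivity in (ii): one must be careful that "normalizing $B$ up to $G_1$" really produces an element of $N(F)$, not merely of $N_G(A')(F)$ for some other maximal split torus $A'$, and that the transitivity statements invoked hold over $F$ (not just over $\sF$ or $\nF$) — this is exactly where the hypothesis on $k$ (perfect of cohomological dimension $\le 1$) and the Bruhat–Tits results \cite{BT2} on the building of $G$ over $F$ enter. I would handle this by working with a fixed $A$ throughout: transitivity of $G_1$ on alcoves of the single apartment $\scrA$, plus the fact that $\mathrm{Stab}_{G(F)}(\fraka_C) \cap N(F)$ surjects onto the relevant quotient, reduces everything to the combinatorics of the affine Weyl group $W_{\aff} = N_1/M_1$ and the length-zero elements. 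A secondary subtlety in (i) is justifying that a parahoric of $G(F)$ meets $M(F)$ in a parahoric of $M(F)$; since $M$ is anisotropic this amounts to identifying $M(F) \cap P_{\fraka_C}$ with the unique parahoric $M_1$, which follows from the description of parahorics as connected stabilizers and the fact that $M_1 = \ker(\nu) \cap$ (the pro-unipotent-radical-type condition) already used above.
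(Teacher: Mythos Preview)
Your argument for (i) follows the paper's approach but takes an unnecessary detour that introduces a gap. After correctly showing $n\in N_1\cap B$ lies in $\ker(\nu)\subset M(F)$, you weaken $n\in M(F)\cap B$ to $n\in M(F)\cap G_1$ and then try to prove $M(F)\cap G_1=M_1$. Your justification (``$G_1$ is generated by parahorics and each parahoric meets $M(F)$ in $M_1$'') does not give this: from $P_i\cap M(F)=M_1$ for each $i$ one cannot conclude $\langle P_i\rangle\cap M(F)=M_1$. The paper avoids this entirely by staying with $M(F)\cap B$: since $B$ is itself a parahoric of $G(F)$, the intersection $M(F)\cap B$ is a parahoric of $M(F)$, hence equals the unique parahoric $M_1$. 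You already state this compatibility fact; just apply it to $B$ rather than to $G_1$.

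For (ii) the paper's proof is a one-liner: the statement displayed as \eqref{doublesystem} records (from \cite[5.2.12]{BT2}) that the inclusion $G_1\subset G(F)$ is $B$-$N$-adapted, and the very definition of that notion in \cite[4.1.2]{BT1} includes $G(F)=G_1\cdot N(F)$, which is exactly the surjectivity you need. You instead attempt to reprove this from building theory. That route can be made to work, but your key step --- ``$g$ stabilizes $\fraka_C$, hence normalizes $B$, hence preserves its `torus part', hence $g\in N(F)\cdot G_1$'' --- is not an argument as written. What you actually need is that the Iwahori $B$ acts transitively on the apartments of $\scrB$ containing $\fraka_C$; granting this, $g\in\mathrm{Stab}(\fraka_C)$ gives $bgAg^{-1}b^{-1}=A$ for some $b\in B$, so $bg\in N(F)$ and $g\in N(F)\cdot B\subset N(F)\cdot G_1$. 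That transitivity is a genuine Bruhat--Tits input and should be cited, not asserted. Since the paper has already packaged all of this into the $B$-$N$-adapted statement preceding the lemma, the cleanest fix is simply to invoke it.
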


\begin{proof}
The group $N_1\cap B$ operates trivially on $\scrA$ and so is contained in $\ker(\nu)\subset M(F)$. In particular, $N_1\cap B=M(F)\cap B$. But $M(F)\cap B$ is a parahoric subgroup of $M(F)$ and therefore equal to $M_1$. \\
The group morphism $N(F)/N_1\rightarrow G(F)/G_1$ is injective by definition. We have to show that $G(F)=N(F)\cdot G_1$. This follows from the fact that the inclusion $G_1\subset G(F)$ is $B$-$N$-adapted, cf. \cite[4.1.2]{BT1}.
\end{proof}

Kottwitz defines in \cite[\S 7]{Kott} a surjective group morphism
\begin{equation}\label{Kott}
\kappa_G: G(F)\longrightarrow X^*(Z(\hat{G})^I)^\Sig.
\end{equation}
Note that in [\emph{loc. cit.}] the residue field $k$ is assumed to be finite, but the arguments extend to the general case.

\begin{lem} \label{kernkott} There is an equality $G_1=\ker(\kappa_G)$ as subgroups of $G(F)$.
\end{lem}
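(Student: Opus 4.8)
The plan is to prove the equality $G_1 = \ker(\kappa_G)$ by reducing to the case where $F$ is replaced by $\nF$, where the result is known (this is exactly where Lemma~\ref{kernkott} depends on \cite{HR}). First I would recall that Kottwitz's morphism $\kappa_G$ is functorial with respect to unramified base change and fits into a commutative diagram relating $\kappa_G$ over $F$ to $\kappa_G$ over $\nF$ via the inclusion $G(F) \hookrightarrow G(\nF)$; concretely, the composite $G(F) \hookrightarrow G(\nF) \xrightarrow{\kappa_G} X^*(Z(\hat G)^I)$ lands in the $\Sig$-invariants and agrees with $\kappa_G$ over $F$ (this is part of the construction in \cite[\S 7]{Kott}). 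The key input over $\nF$ is that $\ker(\kappa_{G,\nF})$ coincides with the subgroup generated by all parahoric subgroups of $G(\nF)$ — equivalently, the parahoric subgroups over $\nF$ are precisely the connected stabilizers, and $\kappa_{G,\nF}$ is exactly the obstruction to being in such a stabilizer. This is the content of \cite{HR} (where $k$ is algebraically closed, so $F$ there plays the role of our $\nF$).

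Next I would establish the two inclusions. For $G_1 \subset \ker(\kappa_G)$: every parahoric subgroup $P_\fraka$ of $G(F)$ is, by definition in Bruhat–Tits, an open compact-mod-center subgroup on which $\kappa_G$ must vanish — indeed $\kappa_G$ restricted to any bounded subgroup is trivial because $X^*(Z(\hat G)^I)^\Sig$ is a finitely generated free abelian group (modulo torsion) with no nontrivial bounded subgroups, and more precisely parahorics over $F$ inject into parahorics over $\nF$, which lie in $\ker(\kappa_{G,\nF})$ by the cited result, so they lie in $\ker(\kappa_G)$ by the compatibility above. Since $G_1$ is generated by the $P_\fraka$, we get $G_1 \subset \ker(\kappa_G)$. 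For the reverse inclusion $\ker(\kappa_G) \subset G_1$: by Lemma~\ref{techlem}(ii), $G(F)/G_1 \cong N(F)/N_1$, so it suffices to show that an element $n \in N(F)$ with $\kappa_G(n) = 0$ already lies in $N_1$. Here I would use that $N(F)/N_1 \hookrightarrow G(\nF)/G_{1,\nF}$ — one must check this map is injective, which should follow from $G_1 = G_{1,\nF} \cap G(F)$ together with $N(F) \cap G_{1,\nF} = N_1$; then $\kappa_G(n) = 0$ forces the image of $n$ in $G(\nF)/\ker(\kappa_{G,\nF}) = G(\nF)/G_{1,\nF}$ to be trivial, hence $n \in G_{1,\nF} \cap N(F) = N_1$.

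The main obstacle I anticipate is the descent step: controlling $G_1$ — the subgroup generated by parahorics over $F$ — in terms of $G_{1,\nF}$, because parahoric subgroups do not behave entirely formally under base change (a parahoric of $G(F)$ is the $\Sig$-fixed points of a parahoric of $G(\nF)$ attached to a $\Sig$-stable facet, but the fixed points of the group generated by all parahorics need not obviously be generated by the parahorics over $F$). The cleanest route is probably to avoid comparing the generated subgroups directly and instead argue: $\ker(\kappa_G) = \ker(\kappa_{G,\nF}) \cap G(F) = G_{1,\nF} \cap G(F)$, and then separately prove $G_{1,\nF} \cap G(F) = G_1$ using the Tits-system structure \eqref{doublesystem} and the fact from \cite[5.2.12]{BT2} that $G_1$ is generated by $B$ and $N_1$ together with the known Iwahori-level statement over $\nF$. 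The finiteness/torsion-freeness properties of $X^*(Z(\hat G)^I)^\Sig$ and the functoriality of $\kappa$ under the norm map for the $\Sig$-action are the technical facts I would lean on to make this identification rigorous.
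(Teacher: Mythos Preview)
Your approach differs from the paper's and contains a genuine gap. The paper does not reduce to $\nF$ and descend; instead it works directly over $F$, invoking two facts from \cite{HR} that it asserts hold in the present generality: (a) for any facet $\fraka$, the parahoric $P_\fraka$ equals $\mathrm{Fix}(\fraka)\cap\ker(\kappa_G)$ (\cite[Prop.~3]{HR}), which immediately gives $G_1\subset\ker(\kappa_G)$; and (b) for an alcove $\fraka_C$, one has $\mathrm{Fix}(\fraka_C)\cap\ker(\kappa_G)=\mathrm{Stab}(\fraka_C)\cap\ker(\kappa_G)$ (\cite[Lemma~17]{HR}). The reverse inclusion is then a one-line transitivity argument: given $\tau\in\ker(\kappa_G)$, since $G_1$ acts transitively on alcoves (via $W_{\aff}$), there is $g\in G_1$ with $\tau g\in\mathrm{Stab}(\fraka_C)\cap\ker(\kappa_G)=B\subset G_1$, so $\tau\in G_1$. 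No comparison of $G_1$ with $G_{1,\nF}$ is needed.

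The obstacle you flag is not merely technical but circular as stated. Granting the result of \cite{HR} over $\nF$, one has $G_{1,\nF}=\ker(\kappa_{G,\nF})$, and by functoriality $\ker(\kappa_{G,\nF})\cap G(F)=\ker(\kappa_G)$. Hence your proposed intermediate statement $G_{1,\nF}\cap G(F)=G_1$ is literally equivalent to the lemma you are trying to prove; the suggestion to establish it ``separately using the Tits-system structure'' therefore needs an argument that does not pass through $\kappa$, and you have not supplied one. (A side remark: the claim that $X^*(Z(\hat G)^I)^\Sig$ has no nontrivial bounded subgroups is false in general---it can have torsion, e.g.\ for adjoint groups---so the boundedness argument for $G_1\subset\ker(\kappa_G)$ does not work as written; your alternative via embedding parahorics into parahorics over $\nF$ is the correct one.) The cleanest fix is to imitate the paper: use the characterization of parahorics as $\mathrm{Fix}(\fraka)\cap\ker(\kappa_G)$ over $F$ and the alcove-stabilizer trick, rather than attempting a global descent of $G_{1,\nF}$.
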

\begin{proof}
For any facet $\fraka$, let $\text{Fix}(\fraka)$ be the subgroup of $G(F)$ which fixes $\fraka$ pointwise. The intersection $\text{Fix}(\fraka)\cap \ker(\kappa_G)$ is the parahoric subgroup associated to $\fraka$, cf. \cite[Proposition 3]{HR}. This implies $G_1\subset \ker(\kappa_G)$. For any facet $\fraka$, let $\text{Stab}(\fraka)$ be the subgroup of $G(F)$ which stabilizes $\fraka$. Fix an alcove $\fraka_C$. There is an equality
\begin{equation}\label{fixstab}
\text{Fix}(\fraka_C)\cap G_1= \text{Stab}(\fraka_C) \cap G_1,
\end{equation}
and \eqref{fixstab} holds with $G_1$ replaced by $\ker(\kappa_G)$, cf. \cite[Lemma 17]{HR}. Assume that the inclusion $G_1\subset \ker(\kappa_G)$ is strict, and let $\tau\in \ker(\kappa_G)\backslash G_1$. By Lemma \ref{techlem} ii), there exists $g\in G_1$ such that $\tau'=\tau\cdot g$ stabilizes $\fraka_C$, and hence $\tau'$ is an element of the Iwahori subgroup $\text{Stab}(\fraka_C) \cap \ker(\kappa_G)$. This is a contradiction, and proves the lemma.
\end{proof}

By Lemma \ref{techlem}, there is an exact sequence
\begin{equation}\label{exactaf}
1\longrightarrow W_{\aff} \longrightarrow W\overset{\kappa_G}{\longrightarrow} X^*(Z(\hat{G})^I)^\Sig\longrightarrow 1.
\end{equation}
The stabilizer of the alcove $\fraka_C$ in $W$ maps isomorphically onto $X^*(Z(\hat{G})^I)^\Sig$ and presents $W$ as a semidirect product
\begin{equation}\label{semidirect}
W= X^*(Z(\hat{G})^I)^\Sig\ltimes W_{\aff}.
\end{equation}
For a facet $\fraka$ contained in the closure of $\fraka_C$, the group $W_\fraka$ is the parabolic subgroup of $W_{\aff}$ generated by the reflections at the walls of $\fraka_C$ which contain $\fraka$.

\begin{thm}\label{doubleclasses}
Let $\fraka$ (resp. $\fraka'$) be a facet contained in the closure of $\fraka_C$, and let $P_\fraka$ (resp. $P_{\fraka'}$) be the associated parahoric subgroup. There is a bijection
\begin{align*}
W_{\fraka}\backslash W/W_{\fraka'} & \overset{\cong}{\longrightarrow}P_{\fraka}\backslash G(F)/P_{\fraka'} \\
W_{\fraka}\,w\,W_{\fraka'} & \longmapsto P_{\fraka}\,n_w\,P_{\fraka'},
\end{align*}
where $n_w$ denotes a representative of $w$ in $N(F)$.
\end{thm}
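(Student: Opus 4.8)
The plan is to deduce the statement from the ordinary Bruhat decomposition for the Tits system $(G_1,B,N_1,\mathbb{S})$ of \eqref{doublesystem}, together with the splitting \eqref{semidirect} of $W$ over $\Omega:=X^*(Z(\hat{G})^I)^\Sig$. First I would record the relevant facts about parahoric subgroups: by definition of $G_1$ every parahoric subgroup of $G(F)$ lies in $G_1$, and conjugates of parahoric subgroups are again parahoric, so $gP_\frakb g^{-1}=P_{g\cdot\frakb}\subseteq G_1$ for all $g\in G(F)$ and all facets $\frakb$. For a facet $\fraka$ in the closure of $\fraka_C$ the Iwahori $B=P_{\fraka_C}$ is contained in $P_\fraka\subseteq G_1$, so $P_\fraka$ is a subgroup of $G_1$ containing $B$, hence a standard parabolic of the Tits system, and comparing $P_\fraka\cap N_1/M_1$ with $W_\fraka$ identifies it as $P_\fraka=B\,W_\fraka\,B$. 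Consequently the Bruhat decomposition for $(G_1,B,N_1,\mathbb{S})$ gives, for any facets $\frakb,\frakb'$ in the closure of $\fraka_C$, a bijection $W_{\frakb}\backslash W_{\aff}/W_{\frakb'}\xrightarrow{\cong}P_{\frakb}\backslash G_1/P_{\frakb'}$ sending $W_{\frakb}\,w\,W_{\frakb'}$ to $P_{\frakb}\,n_w\,P_{\frakb'}$.

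Next I would split both sides of the asserted bijection along $\Omega$. For $\tau\in\Omega$ choose a representative $n_\tau\in N(F)$; since, by \eqref{semidirect}, $\Omega$ is identified with the stabilizer of $\fraka_C$ in $W$, the affine transformation $\nu(\tau)$ preserves the closure of $\fraka_C$, so $\tau\cdot\fraka':=\nu(\tau)(\fraka')$ is again a facet in that closure and $n_\tau P_{\fraka'}n_\tau^{-1}=P_{\tau\cdot\fraka'}$. Because $P_\fraka$ and $P_{\tau\cdot\fraka'}$ lie in the \emph{subgroup} $G_1$, each coset $G_1n_\tau$ is a union of $(P_\fraka,P_{\fraka'})$-double cosets, and right translation by $n_\tau^{-1}$ yields
\[
P_\fraka\backslash G(F)/P_{\fraka'}=\bigsqcup_{\tau\in\Omega}P_\fraka\backslash G_1 n_\tau/P_{\fraka'}\;\xrightarrow[\cong]{g\mapsto gn_\tau^{-1}}\;\bigsqcup_{\tau\in\Omega}P_\fraka\backslash G_1/P_{\tau\cdot\fraka'}.
\]
The same bookkeeping on the combinatorial side — using that $W_{\aff}$ is normal in $W$ by \eqref{exactaf}, that $W_\fraka$ and $W_{\tau\cdot\fraka'}$ lie in $W_{\aff}$, and that conjugation by $\tau$ permutes the reflections at the walls of $\fraka_C$ via $\nu(\tau)$ and thus carries $W_{\fraka'}$ onto $W_{\tau\cdot\fraka'}$ — gives
\[
W_\fraka\backslash W/W_{\fraka'}=\bigsqcup_{\tau\in\Omega}W_\fraka\backslash W_{\aff}\tau/W_{\fraka'}\;\xrightarrow{\cong}\;\bigsqcup_{\tau\in\Omega}W_\fraka\backslash W_{\aff}/W_{\tau\cdot\fraka'}.
\]

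Finally I would glue the pieces: for each $\tau$ apply the Tits-system bijection of the first step with $\frakb=\fraka$ and $\frakb'=\tau\cdot\fraka'$. Tracking representatives, an element $w=\bar w\,\tau$ with $\bar w\in W_{\aff}$ goes first to $P_\fraka\,n_{\bar w}\,P_{\tau\cdot\fraka'}$ and then, under $g\mapsto g\,n_\tau$, to $P_\fraka\,n_{\bar w}n_\tau\,P_{\fraka'}$; since $n_{\bar w}n_\tau$ represents $w$ and any two representatives of $w$ differ by an element of $M_1\subseteq B\subseteq P_{\fraka'}$, the composite bijection $W_\fraka\backslash W/W_{\fraka'}\xrightarrow{\cong}P_\fraka\backslash G(F)/P_{\fraka'}$ is precisely $W_\fraka\,w\,W_{\fraka'}\mapsto P_\fraka\,n_w\,P_{\fraka'}$, proving the theorem.

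The only input that is not pure double-coset bookkeeping is the Bruhat decomposition for the Tits system $(G_1,B,N_1,\mathbb{S})$ together with the identification $P_\fraka=B\,W_\fraka\,B$ for facets $\fraka$ in the closure of $\fraka_C$; I expect the step requiring most care to be verifying that the two $\Omega$-indexed decompositions above match piece by piece and are compatible with the prescription $w\mapsto n_w$, which is exactly where one uses that $G_1$ (resp.\ $W_{\aff}$) is a subgroup (resp.\ a normal subgroup) and that $M_1\subseteq P_{\fraka'}$.
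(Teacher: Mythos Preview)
Your proof is correct and follows essentially the same approach as the paper's: reduce to the Bruhat decomposition for the Tits system $(G_1,B,N_1,\mathbb{S})$ by splitting over $\Omega$ via conjugation by representatives of the stabilizer of $\fraka_C$. The paper compresses your three steps into a single sentence (``Conjugating with elements of $N(F)$ which stabilize the alcove $\fraka_C$, we are reduced to \ldots''), but your explicit bookkeeping---in particular the observation that conjugation by $\tau$ replaces $\fraka'$ by $\tau\cdot\fraka'$, still a facet in the closure of $\fraka_C$---is exactly what that sentence is hiding.
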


\begin{proof}
Conjugating with elements of $N(F)$ which stabilize the alcove $\fraka_C$, we are reduced to proving that  
\begin{equation}\label{reduct}
W_{\fraka}\backslash W_{\aff}/W_{\fraka'} \longrightarrow P_{\fraka}\backslash G_1/P_{\fraka'}
\end{equation}
is a bijection. But \eqref{reduct} is a consequence of the fact that the quadruple \eqref{doublesystem} is a Tits system, cf. \cite[Chap. IV, \S 2, n$^o$ 5, Remark. 2)]{Bou}.
\end{proof}

\begin{rmk}\label{sccover}
Let $G_{\scon}\rightarrow G_{\drv}$ be the simply connected cover of the derived group $G_{\drv}$ of $G$, and denote by $A_{\scon}$ the preimage of the connected component $(A\cap G_{\drv})^0$ in $G_{\scon}$. Then $A_{\scon}$ is a maximal $F$-split torus of $G_{\scon}$. Let $W_{\scon}=W(G_{\scon},A_{\scon})$ be the associated Iwahori-Weyl group. Consider the group morphism $\varphi: G_{\scon}(F)\rightarrow G_{\drv}(F)\subset G(F)$. Then $G_1=\varphi(G_{\scon}(F))\cdot M_1$ by the discussion above \cite[Proposition 5.2.12]{BT2}, and this yields an injective morphism of groups
\[
W_{\scon} \longto W
\]
which identifies $W_{\scon}$ with $W_{\aff}$.
\end{rmk}

\subsection{Passage to $\nF$}   

Let $S$ be a maximal $\nF$-split torus which is defined over $F$ and contains $A$, cf. \cite{BT2}. Denote by $\nscrA=\scrA(G,S,\nF)$ the apartment corresponding to $S$ over $\nF$. The group $\Sig$ acts on $\nscrA$, and there is a natural $\Sig$-equivariant embedding 
\begin{equation}\label{fixapart}
\scrA\longrightarrow\nscrA,
\end{equation}
which identifies $\scrA$ with the $\Sig$-fixpoints $(\nscrA)^\Sig$, cf. \cite[5.1.20]{BT2}. The facets of $\scrA$ correspond to the $\Sig$-invariant facets of $\nscrA$.

Let $T=Z_G(S)$ (a maximal torus) be the centralizer of $S$, and let $N_S=N_G(S)$ be the normalizer of $S$. Let $T_1^{\text{nr}}$ be the unique parahoric subgroup of $T(\nF)$. Denote by $\nW=W(G,S,\nF)$ the Iwahori-Weyl group
\[
\nW=N_S(\nF)/T_1^{\nr}
\]
 over $\nF$. The group $\Sig$ acts on $\nW$, and the group of fixed points $(\nW)^\Sig$ acts on $\scrA$ by \eqref{fixapart}. We have
\[
(\nW)^\Sig=N_S(F)/T_1,
\]
since $H^1(\Sig,T_1^{\on{nr}})$ is trivial. For an element $n\in N_S(F)$ the tori $A$ and $nAn^{-1}$ are both maximal $F$-split tori of $S$ and hence are equal. This shows $N_S(F)\subset N(F)$, and we obtain a group morphism
\begin{equation}\label{fixiwahori}
(\nW)^\Sig=N_S(F)/T_1\longrightarrow N(F)/M_1=W,
\end{equation}
which is compatible with the actions on $\scrA$.

\begin{lem}\label{isolem}
The morphism \eqref{fixiwahori} is an isomorphism, i.e. $(\nW)^\Sig\overset{\cong}{\longrightarrow}W$.
\end{lem}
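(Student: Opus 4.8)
The plan is to compare the morphism \eqref{fixiwahori} with the exact sequence \eqref{exactaf} over $F$ and with its analogue over $\nF$. Since $\nF$ is again a discretely valued complete field with perfect residue field $\bk$ of cohomological dimension $\le 1$, everything established so far applies over $\nF$ as well; in particular Lemmas \ref{techlem} and \ref{kernkott} yield an exact sequence
\[
1\longrightarrow W_{\aff}^{\nr}\longrightarrow \nW\overset{\kappa_G}{\longrightarrow} X^*(Z(\hat G)^I)\longrightarrow 1,
\]
where no Galois invariants appear on the right because $\bk$ is algebraically closed. Taking $\Sig$-invariants gives an exact sequence $1\to (W_{\aff}^{\nr})^\Sig\to (\nW)^\Sig\to X^*(Z(\hat G)^I)^\Sig$, and by the functoriality of the Kottwitz homomorphism the morphism \eqref{fixiwahori} sits inside a morphism of exact sequences towards \eqref{exactaf} which is the identity on $X^*(Z(\hat G)^I)^\Sig$. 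By the four lemma it then suffices to prove: \emph{(a)} the induced map $(W_{\aff}^{\nr})^\Sig\to W_{\aff}$ is an isomorphism; and \emph{(b)} the map $(\nW)^\Sig\to X^*(Z(\hat G)^I)^\Sig$ is surjective. In particular injectivity of \eqref{fixiwahori} will be automatic, so no separate argument is needed for it.

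For \emph{(b)}: by Lemmas \ref{techlem}(ii) and \ref{kernkott} the quotient $X^*(Z(\hat G)^I)^\Sig$ is identified with $G(F)/G_1$, and the image of $(\nW)^\Sig$ there is the image of $N_S(F)$, so \emph{(b)} is equivalent to $G(F)=N_S(F)\cdot G_1$. I would combine $G(F)=N(F)\cdot G_1$ (Lemma \ref{techlem}(ii)) with the identity $N(F)=N_S(F)\cdot M(F)$ --- valid because any two maximal $\nF$-split tori of $M=Z_G(A)$ which are defined over $F$ and contain $A$ are conjugate under $M(F)$, cf. \cite{BT2} --- to reduce the claim to $M(F)\subseteq N_S(F)\cdot G_1$, that is, to $\kappa_G(M(F))\subseteq \kappa_G(N_S(F))$. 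Over $\nF$ the Levi subgroup $M$ is quasi-split, so $\kappa_G(M(\nF))=\kappa_G(T(\nF))$; the corresponding statement over $F$ should then follow by a descent argument using $T\subseteq N_S$, the vanishing $H^1(\Sig,T_1^{\nr})=0$, the fact that $\Sig$ has cohomological dimension $\le 1$, and the structure of $N_S$ over $\nF$.

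For \emph{(a)}: by Remark \ref{sccover} the group $W_{\aff}$ is identified with the Iwahori--Weyl group $W(G_{\scon},A_{\scon},F)$ of the simply connected cover of $G_{\drv}$, and running the same argument over $\nF$ --- where $Z(\hat{G}_{\scon})$ is trivial, so the Kottwitz quotient vanishes --- identifies $W_{\aff}^{\nr}$ with $W(G_{\scon},S_{\scon},\nF)$. Thus \emph{(a)} is the assertion of the present lemma for the simply connected group $G_{\scon}$. In that case $W$ and $\nW$ are the affine Weyl groups of the relative, respectively absolute, affine root systems, and the identification $W_{\aff}\cong (W_{\aff}^{\nr})^\Sig$ is part of the descent of the valued root datum and of the Tits system from $\nF$ to $F$ carried out in \cite[\S 5.1, \S 5.2]{BT2}; concretely one uses that $\scrA=(\nscrA)^\Sig$ by \cite[5.1.20]{BT2} and that the affine reflections of $\scrA$ are precisely the restrictions of the $\Sig$-orbits of affine reflections of $\nscrA$.

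The hard part will be step \emph{(a)} in the simply connected case, i.e. matching the relative affine Weyl group with the $\Sig$-fixed points of the absolute one --- the folding combinatorics underlying Bruhat--Tits descent --- with the cohomological bookkeeping in \emph{(b)} a secondary difficulty.
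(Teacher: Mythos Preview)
Your framework is sound, but the paper's argument is much shorter and sidesteps both of your difficulties. Rather than chase the exact sequence, the paper uses the semidirect product \eqref{semidirect} directly: choosing a $\Sig$-invariant alcove of $\nscrA$, the splitting $\nW=X^*(Z(\hat G)^I)\ltimes W_{\aff}^{\nr}$ is $\Sig$-equivariant, so $(\nW)^\Sig=X^*(Z(\hat G)^I)^\Sig\ltimes(W_{\aff}^{\nr})^\Sig$, and \eqref{fixiwahori} respects these decompositions. This makes your step~(b) automatic --- both alcove-stabilizers are identified with $X^*(Z(\hat G)^I)^\Sig$ via $\kappa_G$ --- and your unfinished ``descent argument'' for $\kappa_G(M(F))\subseteq\kappa_G(T(F))$ (which is not obviously routine: surjectivity of $X_*(T)_I^\Sig\to X^*(Z(\hat M)^I)^\Sig$ is not automatic) is never needed. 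For step~(a) the paper replaces your reduction to $G_{\scon}$ and the citation of \cite[\S5]{BT2} by a two-line geometric argument: $W_{\aff}^{\nr}$ acts simply transitively on the alcoves of $\nscrA$, so the unique element carrying one $\Sig$-invariant alcove to another is automatically $\Sig$-fixed; hence $(W_{\aff}^{\nr})^\Sig$ acts simply transitively on the alcoves of $\scrA$, and since $W_{\aff}$ does as well and the map between them is equivariant, it must be an isomorphism. Your route could likely be completed, but it works hard to reprove facts that the alcove combinatorics gives for free.
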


\begin{proof}
 Let $\fraka_C$ be a $\Sig$-invariant alcove of $\nscrA$. The morphism \eqref{fixiwahori} is compatible with the semidirect product decomposition \eqref{semidirect} given by $\fraka_C$. We are reduced to proving that the morphism
\[
(W_{\aff}^{\nr})^\Sig\longrightarrow W_{\aff}
\]
is an isomorphism. It is enough to show that $(W_{\aff}^{\nr})^\Sig$ acts simply transitively on the set of alcoves of $\scrA$. Let $\fraka_{C'}$ another $\Sig$-invariant alcove of $\nscrA$. Then there is a unique $w\in W_{\aff}^{\nr}$ such that $w\cdot\fraka_C=\fraka_{C'}$. The uniqueness implies $w\in(W_{\aff}^{\nr})^\Sig$.
\end{proof} 

\begin{cor}
Let $\fraka\subset\scrA$ be a facet, and denote by $\nfraka\subset\nscrA$ the unique facet containing $\fraka$. Then $W_\fraka=(W_{\nfraka})^\Sig$ under the inclusion $W\hookto \nW$.
\end{cor}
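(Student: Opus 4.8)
The plan is to reduce the statement to the combinatorics of the affine Weyl groups, using the description of the subgroups $W_{\fraka}$ as standard parabolic subgroups. First I would fix base alcoves. Since $\nfraka$ is the unique facet of $\nscrA$ containing $\fraka$, one has $\fraka=\nfraka\cap\scrA$, cf.\ \cite[5.1.20]{BT2}. Choose an alcove $\fraka_C$ of $\scrA$ whose closure contains $\fraka$; by \emph{loc.\ cit.}\ it is the trace $\fraka_C=\tilde\fraka_C\cap\scrA$ of a $\Sig$-invariant alcove $\tilde\fraka_C$ of $\nscrA$, and then the closure of $\tilde\fraka_C$ contains $\nfraka$. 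Using $\fraka_C$ (resp.\ $\tilde\fraka_C$) to normalise the semidirect decomposition \eqref{semidirect} over $F$ (resp.\ over $\nF$), the proof of Lemma \ref{isolem} shows that the inclusion $W\hookrightarrow\nW$ of the corollary restricts to an identification $W_{\aff}=(W_{\aff}^{\nr})^{\Sig}$ of subgroups of $W_{\aff}^{\nr}$, compatibly with the actions on $\scrA=(\nscrA)^{\Sig}$.

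Next I would use the description recorded just before Theorem \ref{doubleclasses}, applied over $F$ and over $\nF$: $W_{\fraka}$ is the standard parabolic subgroup of $W_{\aff}$ attached to the face $\fraka$ of $\overline{\fraka_C}$, and $W_{\nfraka}$ is the standard parabolic subgroup of $W_{\aff}^{\nr}$ attached to the face $\nfraka$ of $\overline{\tilde\fraka_C}$. By the standard identification of standard parabolic subgroups of a Coxeter group with the pointwise stabilisers of the faces of the fundamental chamber in its Coxeter complex, cf.\ \cite{Bou}, this means $W_{\fraka}=\text{Stab}_{W_{\aff}}(\fraka)$ and $W_{\nfraka}=\text{Stab}_{W_{\aff}^{\nr}}(\nfraka)$, the stabilisers for the actions on $\scrA$, resp.\ on $\nscrA$.

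Finally I would compute. Forming stabilisers commutes with taking $\Sig$-fixed points, so
\[
(W_{\nfraka})^{\Sig}=\bigl(\text{Stab}_{W_{\aff}^{\nr}}(\nfraka)\bigr)^{\Sig}=\text{Stab}_{(W_{\aff}^{\nr})^{\Sig}}(\nfraka)=\text{Stab}_{W_{\aff}}(\nfraka),
\]
the stabiliser for the action of $W_{\aff}\subseteq W_{\aff}^{\nr}$ on $\nscrA$. Every $w\in W_{\aff}$ lies in $(\nW)^{\Sig}$, hence preserves $\scrA=(\nscrA)^{\Sig}$ together with the facet structures of $\scrA$ and of $\nscrA$; since $\nfraka$ is the unique facet of $\nscrA$ containing $\fraka$ and $\fraka=\nfraka\cap\scrA$, it follows that $w\nfraka=\nfraka$ if and only if $w\fraka=\fraka$. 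Hence $\text{Stab}_{W_{\aff}}(\nfraka)=\text{Stab}_{W_{\aff}}(\fraka)=W_{\fraka}$, as claimed. The one point I expect to need care is this last equivalence — that stabilising $\fraka$ inside $\scrA$ already forces stabilising $\nfraka$ inside $\nscrA$ — which hinges on the uniqueness of $\nfraka$ and on $W_{\aff}$ preserving $\scrA$; the remainder is bookkeeping together with the comparison of $F$- and $\nF$-alcove structures from \cite{BT2} already invoked for Lemma \ref{isolem}.
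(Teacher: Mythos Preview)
Your proof is correct and is essentially the argument the paper has in mind: the paper gives no proof beyond the $\Box$, treating the statement as immediate from Lemma~\ref{isolem} together with the description of $W_{\fraka}$ and $W_{\nfraka}$ as standard parabolic subgroups of the respective affine Weyl groups recorded before Theorem~\ref{doubleclasses}. Your write-up simply unpacks this, identifying both sides with stabilisers of facets and using the compatibility $\fraka=\nfraka\cap\scrA$ from \cite[5.1.20]{BT2}; there is nothing to add.
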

\vspace{-0.4cm}
\hfill\ensuremath{\Box}
 
\subsection{The length function on $W$} Let $\scrR=\scrR(G,A,F)$ be the set of affine roots. We regard $\scrR$ as a subset of the affine functions on $\scrA$. The Iwahori-Weyl group $W$ acts on $\scrR$ by the formula
 \begin{equation}
 (w\cdot \alpha)(x)=\alpha(w^{-1}\cdot x)
 \end{equation}
 for $w\in W$, $\alpha\in \scrR$ and $x\in\scrA$. This action preserves non-divisible\footnote{An element $\alpha\in\scrR$ is called non-divisible, if ${1\over 2}\alpha \not\in \scrR$.} roots. 
  
 Fix an alcove $\fraka_C$ in $\scrA$. By \eqref{semidirect}, $W$ is the semidirect product of $W_{\aff}$ with the stabilizer of the alcove $\fraka_C$ in $W$. Hence, $W$ is a quasi-Coxeter system and is thus equipped with a Bruhat-Chevalley partial order $\leq$ and a length function $l$. 
 
For $\alpha\in\scrR$, we write $\alpha>0$ (resp. $\alpha<0$), if $\alpha$ takes positive (resp. negative) values on $\fraka_C$. For $w\in W$, define
\begin{equation}
\scrR(w)\defined\{\alpha\in\scrR\;|\;\alpha>0 \quad\text{and}\quad w\alpha< 0\}.
\end{equation}  
We have $\scrR(w)=\scrR(\tau w)$ for any $\tau$ in the stabilizer of $\fraka_C$. Let $\mathbb{S}$ be the reflections at the walls of $\fraka_C$. These are exactly the elements in $W_{\aff}$ of length $1$. For any $s\in\mathbb{S}$, there exists a unique non-divisible root $\alpha_s\in\scrR(s)$. In particular, $\scrR(s)$ has cardinality $\leq 2$. 

\begin{lem}\label{positivlem}
Let $w\in W$ and $s\in\mathbb{S}$. If $\alpha\in\scrR(s)$, then $w\alpha>0$ if and only if $w\leq ws$.
\end{lem}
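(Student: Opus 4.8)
The plan is to reduce Lemma \ref{positivlem} to the classical description of the length function of the affine Weyl group $W_{\aff}$, regarded as a Coxeter system with set of simple reflections $\mathbb{S}$.

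I would first reduce to the case $\alpha=\alpha_s$. Since $\alpha_s$ is the unique non-divisible element of $\scrR(s)$ and $\tfrac{1}{2}\alpha_s\notin\scrR$, any $\alpha\in\scrR(s)$ equals $\alpha_s$ or $2\alpha_s$; in either case $w\alpha$ is a positive multiple of $w\alpha_s$, so $w\alpha>0$ if and only if $w\alpha_s>0$, and it suffices to treat $\alpha=\alpha_s$. Next I would reduce to $w\in W_{\aff}$: using the semidirect product decomposition \eqref{semidirect}, write $w=\tau w'$ with $w'\in W_{\aff}$ and $\tau$ in the stabilizer of $\fraka_C$. By the definition of the length function and of the Bruhat--Chevalley order on the quasi-Coxeter system $W$ one has $w\leq ws$ if and only if $w'\leq w's$. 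Moreover $w\alpha_s=\tau(w'\alpha_s)$, and $\tau$ preserves the set of positive affine roots, since for $\beta\in\scrR$ and $x\in\fraka_C$ we have $(\tau\beta)(x)=\beta(\tau^{-1}x)$ with $\tau^{-1}x\in\fraka_C$. Hence $w\alpha_s>0$ if and only if $w'\alpha_s>0$, and we may assume $w\in W_{\aff}$.

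It then remains to prove: for $w\in W_{\aff}$ and $s\in\mathbb{S}$, one has $w\alpha_s>0$ if and only if $l(ws)>l(w)$. Granting this, the lemma follows: $s$ is a simple reflection, so $l(ws)=l(w)\pm1$, and $l(ws)>l(w)$ is then equivalent to $w<ws$, hence (as $w\neq ws$) to $w\leq ws$. The displayed equivalence is the standard fact for reflection groups: $s$ permutes the positive affine roots not proportional to $\alpha_s$ and sends the positive multiples of $\alpha_s$ to their negatives, so that comparing the finite sets $\scrR(ws)$ and $\scrR(w)$ — bearing in mind that $l$ counts the non-divisible roots in $\scrR(\cdot)$ — yields $l(ws)=l(w)+1$ when $w\alpha_s>0$ and $l(ws)=l(w)-1$ when $w\alpha_s<0$; this is contained in \cite{Bou}, with \cite{BT1} covering the possibly non-reduced affine root system. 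I do not expect a genuine obstacle here; the only points needing care are passing the abelian quotient of $W$ and the bookkeeping with divisible roots, both handled in the reductions above, so that the substance of the lemma is the cited reflection-group statement.
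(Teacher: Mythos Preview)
Your proposal is correct but follows a different route from the paper's proof. Both arguments begin with the same reductions (to $\alpha=\alpha_s$ non-divisible and to $w\in W_{\aff}$), but then diverge. The paper argues the contrapositive directly via reduced expressions: assuming $w\alpha_s<0$, it locates in a reduced word $w=s_1\cdots s_n$ an index $i$ at which the sign of $s_{i+1}\cdots s_n\,\alpha_s$ flips, deduces $s_{i+1}\cdots s_n\,\alpha_s=\alpha_{s_i}$, and from this obtains $s_{i+1}\cdots s_n\cdot s\cdot s_n\cdots s_{i+1}=s_i$, hence a deletion showing $ws\leq w$; the converse direction is then bootstrapped from the forward one. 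Your argument instead imports the identity ``$l(w)$ equals the number of non-divisible roots in $\scrR(w)$'' from \cite{Bou}/\cite{BT1} and reads off the dichotomy $l(ws)=l(w)\pm 1$ by comparing $\scrR(ws)$ with $\scrR(w)$ via the permutation action of $s$ on positive roots.

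Both are valid, but be aware of the logical ordering: in the paper that length identity is the \emph{Corollary} to Lemma~\ref{rootlem}, whose proof in turn invokes Lemma~\ref{positivlem}. So inside the paper your argument would be circular; you avoid this only by sourcing the identity externally. That is legitimate, but it does require identifying the non-divisible part of the Bruhat--Tits affine root system $\scrR$ with the abstract root system of the Coxeter pair $(W_{\aff},\mathbb{S})$ as treated in \cite{Bou}, which is precisely where your appeal to \cite{BT1} must do real work. The paper's exchange-condition argument has the virtue of being self-contained and of yielding both the lemma and the length formula in one pass; your approach is shorter if one is willing to quote the classical theory, at the cost of that extra identification.
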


\begin{proof}
We may assume that $w\in W_{\aff}$ and that $\alpha_s=\alpha$ is non-divisible. We show that $w\alpha_s<0$ if and only if $ws\leq w$. If $w\alpha_s<0$, then fix a reduced decomposition $w=s_1\cdot\ldots\cdot s_n$ with $s_i\in\mathbb{S}$. There exists an index $i$ such that
\[
s_{i+1}\cdot\ldots\cdot s_n\alpha_s> 0 \qquad\text{and}\qquad s_i\cdot s_{i+1}\cdot\ldots\cdot s_n\alpha_s< 0,
\]
i.e. $s_{i+1}\cdot\ldots\cdot s_n\alpha_s=\alpha_{s_i}$ is the unique non-divisible root in $\scrR(s_i)$. Hence, 
\[
s_{i+1}\cdot\ldots\cdot s_n\cdot s\cdot s_n\cdot\ldots\cdot s_{i+1}=s_i,
\]
and $ws\leq w$ holds true. Conversely, if $ws\leq w$, then $ws\leq (ws)s$. This implies that $(ws)\alpha_s>0$ by what we have already shown. But $s\alpha_s=-\alpha_s$, and $w\alpha_s<0$ holds true.
\end{proof}

\begin{lem}\label{rootlem}
Let $w$, $v\in W$. Then
\[
\scrR(wv)\subset\scrR(v)\sqcup v^{-1}\scrR(w),
\]
and equality holds if and only if $l(wv)=l(w)+l(v)$.
\end{lem}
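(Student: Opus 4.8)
The plan is to reduce the statement to a computation inside the affine Weyl group $W_{\aff}$ and then argue by induction on $l(w)$. First I observe that the stabilizer of $\fraka_C$ acts trivially on $\scrR(\cdot)$, so writing $w=\tau w_0$ and $v=\tau' v_0$ with $w_0,v_0\in W_{\aff}$ and $\tau,\tau'$ in the stabilizer, we have $\scrR(wv)=\scrR(w_0\tau' v_0)=\scrR((w_0\tau'\tau'^{-1})(\tau'v_0))$; using that conjugation by $\tau'$ permutes $\mathbb{S}$ and preserves lengths, one checks $l(wv)=l(w)+l(v)$ is equivalent to the corresponding statement for the $W_{\aff}$-parts, and the claimed set-theoretic inclusion is likewise insensitive to the $\tau$'s. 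So we may assume $w,v\in W_{\aff}$.

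For the inclusion itself: if $\alpha\in\scrR(wv)$, then $\alpha>0$ and $(wv)\alpha<0$. Consider $v\alpha$. Either $v\alpha<0$, in which case $\alpha\in\scrR(v)$; or $v\alpha>0$, in which case from $w(v\alpha)<0$ we get $v\alpha\in\scrR(w)$, i.e. $\alpha\in v^{-1}\scrR(w)$. This already shows $\scrR(wv)\subset\scrR(v)\cup v^{-1}\scrR(w)$. The two pieces are disjoint: if $\alpha$ lay in both, then $\alpha\in\scrR(v)$ gives $v\alpha<0$, while $\alpha\in v^{-1}\scrR(w)$ gives $v\alpha>0$, a contradiction. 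Hence the union is automatically disjoint, which is why the statement writes $\sqcup$.

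For the length statement, I would first establish the general cardinality identity $l(w)=\#\scrR(w)$ (counting non-divisible roots), or at least enough of it to compare cardinalities; this is where Lemma \ref{positivlem} does the real work, via the standard fact that for a reduced decomposition $w=s_1\cdots s_n$ the roots $\scrR(w)$ are exactly the $s_n\cdots s_{i+1}(\text{roots of }s_i)$, which are pairwise distinct. Granting $l(\cdot)=\#\scrR(\cdot)$, the inclusion $\scrR(wv)\subset\scrR(v)\sqcup v^{-1}\scrR(w)$ gives $l(wv)\le l(v)+l(w)$ always, with equality in the length inequality if and only if the set inclusion is an equality. So the two ``equality'' conditions are literally the same statement, and there is nothing further to prove once the counting identity is in hand. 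The alternative, if one wants to avoid the counting identity, is a direct induction on $l(w)$: the case $l(w)=0$ is trivial; for the inductive step write $w=sw'$ with $s\in\mathbb{S}$ and $l(w')=l(w)-1$, relate $\scrR(wv)$ to $\scrR(w'v)$ by peeling off $s$, and use Lemma \ref{positivlem} to control whether the distinguished root of $s$ enters or leaves $\scrR((w'v)^{-1})$; the exchange condition in the Tits system \eqref{doublesystem} then forces the length additivity exactly when the set grows.

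The main obstacle I anticipate is the bookkeeping with divisible versus non-divisible roots: the action of $W$ on $\scrR$ preserves non-divisibility, so $\scrR(w)$ splits into non-divisible roots and their doubles, and the clean identity $l(w)=\#\scrR(w)$ holds only after restricting to non-divisible roots (equivalently, $\#\scrR(w)=2l(w)$ in the non-reduced case, but only for those $\alpha_s$ with $2\alpha_s\in\scrR$). Getting the factor-of-two accounting consistent between $\scrR(wv)$, $\scrR(v)$ and $v^{-1}\scrR(w)$ — so that the inclusion of full sets and the inclusion of non-divisible parts are simultaneously equalities — is the delicate point; everything else is formal manipulation with the definitions already set up above.
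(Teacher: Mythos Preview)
Your argument for the inclusion and its disjointness is correct and in fact more explicit than the paper's (which just records the inclusion as ``easy to see''). For the equivalence, the paper proceeds differently from both of your options: it inducts on $l(v)$ rather than $l(w)$, reducing to the base case $v=s\in\mathbb{S}$, and there it invokes Lemma~\ref{positivlem} directly---equality in $\scrR(ws)\subset\scrR(s)\sqcup s\scrR(w)$ forces $\scrR(s)\subset\scrR(ws)$, which (since $s\alpha=-\alpha$ for $\alpha\in\scrR(s)$) gives $w\alpha>0$, hence $w\leq ws$; the converse is similar. No counting identity is used, so the divisible-root bookkeeping never enters. Your counting approach (a) is also valid, and the obstacle you anticipate dissolves: because $W$ preserves non-divisibility and $2\alpha\in\scrR(x)\Leftrightarrow\alpha\in\scrR(x)$, equality on non-divisible parts is equivalent to equality on the full sets, so once you establish $l(\cdot)=\#\{\text{non-divisible roots in }\scrR(\cdot)\}$ via reduced expressions you are done. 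Note that in the paper the identity $l(w)=|\scrR(w)|$ appears as a \emph{corollary} of this lemma rather than an input, so your route reverses the logical dependency; both orderings are legitimate, but the paper's has the mild advantage of keeping the non-reduced accounting entirely out of sight.
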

\begin{proof}
We may assume that $w$, $v\in W_{\aff}$. Assume that $s=v\in \mathbb{S}$, which will imply the general case by induction on $l(v)$. The inclusion 
\begin{equation}\label{rootlemproof}
\scrR(ws)\subset\scrR(s)\sqcup s\scrR(w) 
\end{equation}
is easy to see. If in \eqref{rootlemproof} equality holds, then we have to show that $l(ws)=l(w)+1$, i.e. $w\leq ws$. In view of Lemma \ref{positivlem}, it is enough to show that $w\alpha>0$ for $\alpha\in\scrR(s)$. But this is equivalent to $\scrR(s)\subset\scrR(ws)$, and we are done. Conversely, if $w\leq ws$, then equality in \eqref{rootlemproof} also follows from Lemma \ref{positivlem}.
\end{proof}

\begin{cor}
If every root in $\scrR$ is non-divisible, then $l(w)=|\scrR(w)|$ for every $w\in W$.
\end{cor}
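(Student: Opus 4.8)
The plan is to combine Lemma \ref{rootlem} with an induction on the length, after first peeling off the length-zero part of $W$. By the semidirect product decomposition \eqref{semidirect}, any $w\in W$ can be written as $w=\tau v$ with $v\in W_{\aff}$ and $\tau$ in the stabilizer of $\fraka_C$. By the definition of the length function on the quasi-Coxeter system $W$ one has $l(\tau v)=l(v)$, and by the identity $\scrR(w)=\scrR(\tau w)$ recorded above we have $\scrR(\tau v)=\scrR(v)$; hence both sides of the asserted equality are unchanged if we replace $w$ by $v$, and it suffices to prove $l(w)=|\scrR(w)|$ for $w\in W_{\aff}$. (This also shows a posteriori that $\scrR(w)$ is always finite.)

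Next I would argue by induction on $l(w)$ for $w\in W_{\aff}$. If $l(w)=0$ then $w=e$ and $\scrR(e)=\emptyset$, so the claim holds. If $l(w)\geq 1$, fix a reduced expression and write $w=w's$ with $s\in\mathbb{S}$ and $l(w')=l(w)-1$. Applying Lemma \ref{rootlem} with the pair $(w',s)$ in place of $(w,v)$, and using that $l(w's)=l(w')+l(s)=l(w')+1=l(w)$, the inclusion there becomes an equality:
\[
\scrR(w)=\scrR(w's)=\scrR(s)\sqcup s\,\scrR(w').
\]
Since $s$ is an involution this is $\scrR(s)\sqcup s^{-1}\scrR(w')$, and the union is disjoint (an element lying in both would be a positive root $\alpha$ with $s\alpha=\beta$ for some $\beta\in\scrR(w')$, forcing $\beta=s\alpha<0$, a contradiction). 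Taking cardinalities gives $|\scrR(w)|=|\scrR(s)|+|\scrR(w')|$, and by the inductive hypothesis $|\scrR(w')|=l(w')=l(w)-1$.

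It remains to compute $|\scrR(s)|$, and this is the only place the hypothesis is used. By assumption every root of $\scrR$ is non-divisible; in particular $2\alpha_s\notin\scrR$, since otherwise $\tfrac12(2\alpha_s)=\alpha_s\in\scrR$ would exhibit $2\alpha_s$ as divisible. As $\scrR(s)$ has cardinality $\leq 2$ and contains the unique non-divisible root $\alpha_s$, it follows that $\scrR(s)=\{\alpha_s\}$, so $|\scrR(s)|=1$. Therefore $|\scrR(w)|=1+(l(w)-1)=l(w)$, completing the induction. The whole argument is essentially bookkeeping on top of Lemma \ref{rootlem}; the only genuine ingredient is the additivity of $|\scrR(\cdot)|$ along a reduced word, which in turn rests on the disjointness in Lemma \ref{rootlem} and on the non-divisibility hypothesis ensuring $|\scrR(s)|=1$ rather than $2$.
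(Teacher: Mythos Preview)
Your argument is correct and is exactly the intended one: the paper records the corollary with no proof (just the box), and the natural way to read it off Lemma~\ref{rootlem} is precisely the length induction you carry out, using that under the hypothesis each $\scrR(s)$ reduces to the single root $\alpha_s$. Two small remarks: the disjointness you re-verify is already implicit in the $\sqcup$ of Lemma~\ref{rootlem} (if $\alpha\in\scrR(s)$ then $s\alpha<0$, while $\alpha\in s\scrR(w')$ forces $s\alpha>0$), and your detour through $2\alpha_s\notin\scrR$ can be shortened to ``all roots in $\scrR(s)$ are non-divisible, and $\scrR(s)$ contains a unique non-divisible root.''
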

\vspace{-0.2cm}
\hfill\ensuremath{\Box} 
 
\subsection{The length function on $W^{\text{nr}}$} In this section, the residue field $k$ is finite of cardinality $q$. Let $\nscrR=\scrR(G,S,\nF)$ be the set of affine roots over $\nF$. Note that every root of $\nscrR$ is non-divisible, since $G\otimes \nF$ is residually split. Let $\nW$ be the Iwahori-Weyl group over $F ^{\nr}$. Denote by $\fraka_C^{\nr}$ the unique $\Sig$-invariant facet of $\nscrA$ containing $\fraka_C$. Let $\leq ^{\nr}$ be the corresponding Bruhat order and $l^{\nr}$ the corresponding length function on $\nW$. By Lemma \ref{isolem}, we may regard $W$ as the subgroup of $\nW$ whose elements are fixed by $\Sig$. 

Let $w\in W$. If $\alpha\in\nscrR(w)$, then its restriction to $\scrA$ is non-constant, and hence $\alpha\in\scrR$ by \cite[1.10.1]{Ti}. We obtain a restriction map
\begin{equation}\label{restriction}
\begin{aligned}
\nscrR(w) & \longrightarrow \scrR(w) \\
\alpha & \longmapsto \alpha|_{\scrA}.
\end{aligned}
\end{equation}

 \begin{prop}\label{unramredprop}
The inclusion $W\subset \nW$ is compatible with the Bruhat orders in the sense that for $w,w'\in W$ we have $w\leq w'$ if and only if $w\leq^{\nr}w'$, and $l(w)=0$ if and only if $l^{\nr}(w)=0$. For $w\in W$, there is the equality
\[
|BwB/B|=q^{l^{\nr}(w)},
\]
where $B$ is the Iwahori subgroup in $G(F)$ attached to $\fraka_C$.
\end{prop}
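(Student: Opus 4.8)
The plan is to handle the three assertions in sequence, treating the point-count as the main goal that forces the structural compatibilities along the way.

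First I would prove that $l(w)=0$ iff $l^{\nr}(w)=0$. The elements of length $0$ in $W$ are precisely those stabilizing the alcove $\fraka_C$, i.e.\ the image of the stabilizer group appearing in \eqref{semidirect}; similarly for $\nW$ and $\fraka_C^{\nr}$. Since $\fraka_C^{\nr}$ is the unique $\Sig$-invariant alcove of $\nscrA$ containing $\fraka_C$ (and $\scrA=(\nscrA)^\Sig$ identifies the alcoves of $\scrA$ with the $\Sig$-invariant alcoves of $\nscrA$), an element $w\in W\subset\nW$ stabilizes $\fraka_C$ in $\scrA$ if and only if it stabilizes $\fraka_C^{\nr}$ in $\nscrA$. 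This gives the length-$0$ statement, and identifies $W_{\aff}$ with $(W^{\nr}_{\aff})^\Sig$ as in Lemma \ref{isolem}.

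Next, the compatibility of Bruhat orders. Here I would use the root-theoretic description of the length and order. By Lemma \ref{rootlem} and its Corollary, over $\nF$ every root is non-divisible, so $l^{\nr}(w)=|\nscrR(w)|$ for $w\in\nW$; and for $w\le^{\nr}w'$ in a quasi-Coxeter system one has the usual subword characterization. The key geometric input is the restriction map \eqref{restriction} $\nscrR(w)\to\scrR(w)$: I want to show it is surjective with fibers of controlled size, or at least that $\nscrR(w)$ and $\scrR(w)$ determine each other. Concretely, for $w\in W$, an affine root $\alpha\in\scrR$ lies in $\scrR(w)$ iff some (equivalently, by $\Sig$-invariance of $w$ and of $\fraka_C$, every) affine root of $\nscrR$ restricting to a positive multiple of $\alpha$ lies in $\nscrR(w)$; the divisibility subtlety in $\scrR$ is exactly accounted for by how many affine roots over $\nF$ restrict to a given affine root over $F$ (this is the $\SU_3$ / non-reduced echelonnage phenomenon, governed by \cite[1.10]{Ti}). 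To get the order compatibility I would argue: if $w\le w'$ in $W$ via a reduced subword in $\mathbb{S}$, each simple reflection $s\in\mathbb{S}$ is the $\Sig$-orbit sum of simple reflections of $\nW$, so the subword lifts to a subword over $\nF$, giving $w\le^{\nr}w'$; conversely, if $w\le^{\nr}w'$, intersect with $\Sig$-invariants, using that a reduced word over $\nF$ for a $\Sig$-invariant element can be chosen $\Sig$-stably (since $(W^{\nr}_{\aff})^\Sig$ is itself a Coxeter group on $\mathbb{S}$ and its length is the restriction of $l^{\nr}$ up to the orbit-size normalization — this is where one must be careful that $l$ on $W$ is not literally the restriction of $l^{\nr}$, only the order is).

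Finally the point-count $|BwB/B|=q^{l^{\nr}(w)}$. Reduce to $w\in W_{\aff}$ by absorbing a length-$0$ element $\tau$ (which stabilizes $\fraka_C$, so $B\tau B=\tau B=B\tau$ and contributes $q^0=1$), then induct on $l(w)$. For $s\in\mathbb{S}$, $BsB/B$ is the set of $\algQl$-points of the affine line bundle $P_s/B$ where $P_s=B\cup BsB$ is the parahoric attached to the corresponding wall: over $\nF$ this is $\bbA^1$, but over $F$ it is the Weil restriction along the residue-field extension cut out by the $\Sig$-action on the $\nF$-simple roots summing to $s$, hence has $q^{l^{\nr}(s)}$ points (where $l^{\nr}(s)=|\Sig$-orbit$|$). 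The inductive step uses that for $w\le ws$ one has $BwB\cdot BsB=BwsB$ and $|BwsB/B|=|BwB/B|\cdot|BsB/B|$, together with $l^{\nr}(ws)=l^{\nr}(w)+l^{\nr}(s)$ — the latter following from the order compatibility just established (since $w\le ws$ in $W$ implies $w\le^{\nr}ws$, and over $\nF$ the order and length are the honest Coxeter ones). The main obstacle, and the part requiring genuine care rather than bookkeeping, is the precise relationship in the third assertion between the geometry of $P_s/B$ over $F$ and the integer $l^{\nr}(s)$: one must identify $BsB/B$ with the $k$-points of a Weil restriction $\Res_{k'/k}\bbA^1_{k'}$ for the appropriate finite subextension $k'/k$ of degree $l^{\nr}(s)$, which comes down to analyzing the reductive quotient of the parahoric $P_s$ and its Bruhat decomposition over the finite field $k$ — this is ultimately in \cite{BT2}, but pinning it down to exactly $q^{l^{\nr}(s)}$ (as opposed to $q^{l(s)}$) is the crux.
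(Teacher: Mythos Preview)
Your overall architecture matches the paper's: reduce to $W_{\aff}$, take a reduced expression, use additivity $l^{\nr}(ws)=l^{\nr}(w)+l^{\nr}(s)$ for $w\le ws$ to reduce the point count to the case of a single $s\in\mathbb{S}$, and then analyze the parahoric $P_s=B\cup BsB$. The paper establishes the additivity via the root-theoretic sublemma $\nscrR(ws)=\nscrR(s)\sqcup s\,\nscrR(w)$ (proved using the restriction map \eqref{restriction} together with Lemma~\ref{positivlem}), whereas you propose to go through the Bruhat-order compatibility first; either route can be made to work.

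However, your base case contains a genuine error. You assert that $s\in\mathbb{S}$ is the product of the $\Sig$-orbit of simple reflections in $\mathbb{S}^{\nr}$, that $l^{\nr}(s)=|\Sig\text{-orbit}|$, and that $BsB/B$ is the $k$-points of $\Res_{k'/k}\bbA^1$ for the residue extension of that degree. This is false when the orbit consists of \emph{adjacent} nodes of the affine Dynkin diagram over $\nF$. Concretely, take $G=\SU_3$ for an unramified quadratic extension: over $\nF$ the group is $\SL_3$ with affine diagram $\widetilde{A}_2$, and Frobenius swaps two adjacent nodes $s_1,s_2$. The corresponding $F$-simple reflection is the longest element $s_1s_2s_1$ of the $A_2$-parabolic, so $l^{\nr}(s)=3$ while the orbit has size $2$. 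The reductive quotient of the parahoric $\calP_s$ is the finite unitary group $\SU_3$ over $k$, whose Borel has a $3$-dimensional (Heisenberg-type) unipotent radical, not $\Res_{k'/k}\bbG_a$; so $|BsB/B|=q^3$, not $q^2$.

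The paper avoids this by never invoking a Weil-restriction description. It identifies $P_s/B$ with $\bar{\calP}_{\red}(k)/\bar{\calB}(k)$ where $\bar{\calP}_{\red}$ is the maximal reductive quotient of $\calP_s\otimes k$ (a connected reductive $k$-group of semisimple $k$-rank $1$, but possibly higher absolute rank), and then $BsB/B$ with the $k$-points of the big cell $\bar{U}$, which is an affine space of dimension $\dim\bar{U}$. The crucial identity is $\dim\bar{U}=|\nscrR(s)|=l^{\nr}(s)$, obtained by identifying $\nscrR(s)$ with the positive roots of $\bar{\calP}_{\red}\otimes\bar{k}$ relative to $\bar{\calB}\otimes\bar{k}$. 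Your plan becomes correct once you replace the Weil-restriction picture by this one.
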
 

\begin{proof}
We need some preparation. Let $w\in W$, $s\in\mathbb{S}$ with $w\leq ws$.
\begin{sublem}\label{preplem} 
There is an equality
\[
\nscrR(ws)=\nscrR(s)\sqcup s\nscrR(w).
\]
In particular, $l^{\nr}(ws)=l^{\nr}(w)+l^{\nr}(s)$.
\end{sublem}
\begin{proof}
By Lemma \ref{rootlem} applied to $\nscrR$, the inclusion `$\subset$' holds for general $w,s\in \nW$.\smallskip \\
There is the inclusion $\nscrR(s)\subset\nscrR(ws)$: If $\alpha\in\nscrR(s)$, then $\alpha|_{\scrA}\in\scrR(s)$ by \eqref{restriction}. Since $w\leq ws$, we have $w\cdot\alpha|_{\scrA}>0$ by Lemma \ref{positivlem}. So $ws\cdot\alpha|_{\scrA}<0$ which shows that $ws\cdot\alpha<0$.\\
The inclusion $s\nscrR(w)\subset\nscrR(ws)$ follows similarly.
\end{proof}

Sublemma \ref{preplem} implies that the inclusion $W\subset \nW$ is compatible with the Bruhat orders. To show the rest of the proposition, we may assume that $w\in W_{\aff}$. Fix a reduced decomposition $w=s_1\cdot\ldots\cdot s_n$ with $s_i\in\mathbb{S}$. By standard facts on Tits systems, the multiplication map
\begin{equation}\label{multmap}
Bs_1B\times^B\ldots\times^BBs_nB/B\longrightarrow BwB/B
\end{equation}
is bijective. In view of Sublemma \ref{preplem}, we reduce to the case that $n=1$, i.e. $s=w\in\mathbb{S}$ is a simple reflection. Let $\calB$ be the Iwahori group scheme over $\mathcal{O}_F$ corresponding to the Iwahori subgroup $B$, and denote by $\calP$ the parahoric group scheme corresponding to the parahoric subgroup $B\cup BsB$. Let $\bar{\calP}_{\red}$ be the maximal reductive quotient of $\calP\otimes k$. This is a connected reductive group over $k$ of semisimple $k$-rank $1$. The image of the natural morphism
\[
\calB\otimes k\longrightarrow \calP\otimes k \longrightarrow\bar{\calP}_{\red},  
\]
is a Borel subgroup $\bar{\calB}$ of $\bar{\calP}_{\red}$. This induces a bijection
\begin{equation}\label{redmorph}
P/B\longrightarrow \bar{\calP}_{\red}(k)/\bar{\calB}(k).
\end{equation}
By Lang's Lemma, we have $\bar{\calP}_{\red}(k)/\bar{\calB}(k)=(\bar{\calP}_{\red}/\bar{\calB})(k)$. Let $\os$ be the image of $s$ under \eqref{redmorph}, and denote by $C_\os$ the $\bar{\calB}$-orbit of $\os$ in the flag variety $\bar{\calP}_{\red}/\bar{\calB}$.  It follows that the image of $BsB/B$ under \eqref{redmorph} identifies with the $k$-points $C_\os(k)$. Note that $\lan\os\ran$ is the relative Weyl group of $\bar{\calP}_{\red}$ with respect to the reduction to $k$ of the natural $\mathcal{O}_F$-structure on $A$. Then $C_\os\simeq \bar{U}$ where $\bar{U}$ denotes the unipotent radical of $\bar{\calB}$. But $\bar{U}$ is an affine space and hence $|C_\os(k)|=q^{\dim(\bar{U})}$. On the other hand, 
\[
l^{\nr}(s)=|\nscrR(s)|=\dim(\bar{U}),
\]
where the last equality holds because $\nscrR(s)$ may be identified with the positive roots of $\bar{\calP}_{\red}\otimes \bar{k}$ with respect to $\overline{\calB}\otimes \bar{k}$. 
\end{proof}

\begin{rmk}
i) If $G$ is residually split, then $l(w)=l^{\nr}(w)$ for all $w\in W$. \smallskip\\
ii) Tits attaches in \cite[1.8]{Ti} to every vertex $v$ of the local Dynkin diagram a positive integer $d(v)$. To the vertex $v$, there corresponds a non-divisible affine root $\alpha_v\in\scrR$, and a simple reflection $s_v\in\mathbb{S}$. Then Proposition \ref{unramredprop} shows that $d(v)=l^{\nr}(s_v)$, cf. \cite[3.3.1]{Ti}.  
\end{rmk}

\end{document}